\documentclass[12pt]{amsart}
\usepackage[margin=1in]{geometry}

\title[Macdonald symmetry at $q=1$ and inv-preserving bijections]{Macdonald symmetry at $q=1$ and a new class of inv-preserving bijections on words}

\author{Maria Gillespie, Ryan Kaliszewski, and Jennifer Morse}

\keywords{Symmetric functions, Mahonian statistics, Macdonald polynomials, permutations, $q$-series, Young tableaux, bijections on words}

%% Packages %%
\usepackage{amsmath}
\usepackage{amsfonts}
\usepackage{amssymb}
\usepackage{amsthm}
%COMMENT OUT HYPERREF FOR ARXIV
%\usepackage{hyperref}
\usepackage{graphicx}
\usepackage{xypic}
\usepackage{tikz}
\usepackage{tabmac_avi}

%% Fonts %%

%% Number spaces %%

\newcommand{\ZZ}{\mathbb{Z}}

\newcommand{\Sn}{\mathfrak S_n}

%% Environments %%

\newtheorem{theorem}{Theorem}[section]

\newtheorem{corollary}[theorem]{Corollary}

\newtheorem{proposition}[theorem]{Proposition}
\newtheorem{question}[theorem]{Question}

\theoremstyle{definition}
\newtheorem{definition}{Definition}
\newtheorem{example}{Example}
\newtheorem{remark}{Remark}

%% Shorthands and macros %%

\DeclareMathOperator{\inv}{inv}
\DeclareMathOperator{\Inv}{Inv}

\DeclareMathOperator{\maj}{maj}

\newcommand{\elevator}{\psi}
\newcommand{\foata}{\varphi}
\newcommand{\tH}{\widetilde{H}}
\renewcommand{\th}{^{\text{th}}}

\begin{document}

\maketitle

\begin{abstract}We give a direct combinatorial proof of the $q,t$-symmetry relation $\tH_{\mu}(X;q,t)=\tH_{\mu'}(X;t,q)$ in the Macdonald polynomials $\tH_\mu$ at the specialization $q=1$.  The bijection demonstrates that the Macdonald $\inv$ statistic on the permutations of any given row of a Young diagram filling is Mahonian.  Moreover, our bijection gives rise a family of new bijections on words that preserves the classical Mahonian $\inv$ statistic.
\end{abstract}

\section*{Introduction}

Two of the best known statistics on words $w=w_1w_2\cdots w_n$ on an ordered alphabet are the major index ($\maj$) and inversion number ($\inv$), defined by
\[ 
\maj(w)=\sum_{w_d>w_{d+1}} d\;,
  \qquad 
\inv(w)=|\{(i,j)\mid i<j\text{ and }w_i>w_j\}|\;.
\]
The major index was introduced by MacMahon \cite{MacMahon13}, and the inversion number was well studied even by that time.  MacMahon proved that the major index and inversion number are \emph{equidistributed}, that is their generating functions over any word $w$ are equal:
\[ 
\sum_{w'\text{ a rearrangement of $w$}} q^{\maj(w')} 
=
\sum_{w'\text{ a rearrangement of $w$}} q^{\inv(w')}\;.
\]
We refer to any statistic that is equidistributed with inv (or maj) as \textit{Mahonian}.

A variant of both of these statistics arises in the combinatorial formula for the Macdonald polynomials $\tH_\mu(x_1,x_2,\ldots;q,t)$, originally defined in a slightly different form in \cite{Macdonald}.  They form an orthogonal basis of the ring of symmetric functions $\Lambda_{\mathbb{Q}(q,t)}(X)=\Lambda_{\mathbb{Q}(q,t)}(x_1,x_2,x_3,\ldots)$, and are a $q,t$-analog of the homogeneous symmetric functions.  

Macdonald polynomials appear in a wide range of mathematical areas such as the representation theory of quantum groups \cite{Fomin97} and double affine Hecke algebras \cite{Cherednik95}, the Calogero-Sutherland model in particle physics \cite{particle}, the ring of diagonal harmonics and the shuffle algebra of symmetric functions \cite{HHLRU}, the geometry of Hilbert schemes \cite{Haiman01}, Springer fibers of Hikita \cite{hikita12}, 
%extensions of HOMFLY polynomials for knot invariants \cite{ors}, 
and the elliptic Hall algebra of Shiffmann-Vasserot \cite{ehall}.  They are known to be Schur positive \cite{Haiman01} and are therefore the Frobenius characters of certain doubly graded $\Sn$-modules $\mathcal{H}_\mu$ \cite{GarsiaHaiman}.  

Most progress in understanding the structure of Macdonald polynomials arose from generalizations of Lascoux and Sch\"utzenberger's work on the Hall-Littlewood polynomials \cite{LS}, which have been shown to be a special case of the Macdonald polynomials.  Characterizing Macdonald polynomials themselves remained non-trivial until 2004 when Haglund \cite{Haglund04} gave a simple combinatorial formula for computing the Macdonald polynomials,
\begin{equation} \tH_\mu(X;q,t)=\sum_{F} q^{\inv(F)}t^{\maj(F)}X^F \end{equation}
where the sum is over combinatorial fillings of shape $\mu$.

In the formula above, the major index is defined on the columns of the filling, mirroring MacMahon's definition on words.  However, the inversion statistic counts inversion pairs and inversion triples.  Inversion pairs are inversions (in the standard sense) appearing in the first row.  Inversion triples appearing within a fixed row are believed to be Mahonian, but a direct proof of this eluded researchers.

Using algebraic means, Haiman proved that Macdonald polynomials satisfy a conjugate $q,t$-symmetry:
\begin{equation} \tH_\mu(X;q,t)=\tH_{\mu'}(X;t,q) \end{equation}
where $\mu'$ is the conjugate partition.
A natural question arose about whether a combinatorial proof of this fact could be found using Haglund's formula.  So far no such proof has been found.  

However, great strides have been made in several important special cases.  Gillespie \cite{Gillespie16} gave a combinatorial proof for hook shapes $\mu$, for certain infinite families of shapes $\mu$ in the Hall-Littlewood specialization $t=0$, and for the coefficient of $x_1\cdots x_n$ in the $t=0$ specialization for all shapes.  Gillespie also showed that
\begin{equation} \label{Macdonaldq=1} \tH_\mu(X;1,t) = \tH_{\mu'}(X;t,1) \end{equation}
by using a generalization of the Carlitz bijection on permutations \cite{carlitz75} and an equidistribution argument.   
Independently, Kaliszewski and Morse \cite{KaMo} have proven Equation \eqref{Macdonaldq=1} by using a new Mahonian statistic and quasisymmetric functions.  
Even in this case, however, a direct, bijective proof of Equation \eqref{Macdonaldq=1} on fillings has eluded researchers.

In this paper we will show that the inversion statistic, when restricted to a single row of a combinatorial filling, is Mahonian by constructing a bijection between words called the \textit{elevator map}.  We will then use this bijection together with Foata's bijection \cite{foata68}, $\foata:W_n\to W_n$, or any other such bijection that satisfies
\[ \maj(w) = \inv(\foata(w)), \]
to bijectively prove Equation \eqref{Macdonaldq=1}.  (See Section \ref{section:Macdonald}.)  The elevator map additionally has several interesting consequences and specializations in its own right, and we explore a number of these consequences in Sections \ref{section:liftmaps} and \ref{section:elevator} below.

\section{Relative inversions and lift maps} \label{section:liftmaps}

Let $W_n$ be the set of all words of length $n$ from the alphabet $\ZZ_+$.  For $w\in W_n$ let $w_i$ denote the $i\th$ letter of $w$ and $w_{[i,j]}$ be the subword
\[ w_{[i,j]}=w_iw_{i+1}\ldots w_j. \]
Recall that for a word $w\in W_n$, an \textit{inversion} is a pair $(i,j)$ such that $i<j$ and $w_i>w_j$.  Let the set of inversions of $w$ be denoted $\Inv(w)$ and $\inv(w) = |\Inv(w)|.$  We can generalize the notion of an inversion as follows.

\begin{definition}
For $k\in\ZZ_{\geq 0}$ a \textit{relative $k$-inversion}, or simply $k$-inversion, of a word $w\in W_n$ is a pair $(i,j)$ such that $i<j$ and either $k\ge w_i> w_{j}$, $w_i> w_{j}>k$, or $w_{j}> k\ge w_i$. In other words, $w_i\neq w_j$, $w_j\neq k$, and there is a cyclic shift of the sequence $(k,w_i,w_j)$ in which the entries are nonincreasing.  

Denote the set of $k$-inversions of $w$ by $\Inv^k(w)$ and let 
\[ \inv^k(w)=|\Inv^k(w)|. \]
\end{definition}

Note that a $0$-inversion is the same as a usual inversion.

\begin{example}
  The word $25132$ has $2$-inversions $(1,2)$ (corresponding to the first and second entries), $(1,3)$, $(1,4)$, $(2,4)$, $(3,4)$.  Thus $\inv^2(25132)=5$. 
\end{example}

\begin{remark}
  The definition of a $k$-inversion can be realized as a standard inversion under the ordering $\prec_k$ defined by
\begin{equation} \label{korder} k+1\prec_k k+2\prec_k\cdots\prec_k n\prec_k 1\prec_k\cdots\prec_k k. \end{equation}
\end{remark}

\begin{definition}
  For each $i\in \mathbb{Z}_+$ we define the $i\th$ \textit{basement lift map} $B_i:W_n\to W_n$ by $(B_i(w))_j=i$ if and only if $w_{n+1-j}=i$, and the subword $u$ of $w$ consisting of all entries not equal to $i$ is equal to the subword $v$ of $B_i(w)$ consisting of all entries not equal to $i$.
\end{definition}

\begin{example}
Let $w$ be the word $22{\color{red}34}2{\color{red}3313}2$, then
\[ B_2(w) = 2{\color{red}3433}2{\color{red}13}22. \]
The entries not equal to 2 have been highlighted read so that we can easily see that the positions of the 2's have been reversed and the other letters remain in the same order.
\end{example}

\begin{proposition} \label{invprop}
 If $w\in W_n$, we have
 \begin{equation} \inv^k(w) = \inv^{k+1}(B_{k+1}(w)). \end{equation}
\end{proposition}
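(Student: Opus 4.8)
The plan is to use the order-theoretic reformulation from the Remark: $\inv^k$ counts inversions with respect to the linear order $\prec_k$ on $\ZZ_+$ in which $k+1$ is the smallest letter, followed by $k+2, k+3, \ldots$, and then $1, 2, \ldots, k$ at the top. The only difference between $\prec_k$ and $\prec_{k+1}$ is that the single letter $k+1$, which is the $\prec_k$-minimum, becomes the $\prec_{k+1}$-maximum, while on $\ZZ_+\setminus\{k+1\}$ the two orders agree. So I would first record two structural facts. First, $B_{k+1}$ reverses the "pattern of $(k+1)$'s": by definition $(B_{k+1}(w))_j = k+1$ exactly when $w_{n+1-j}=k+1$, and the non-$(k+1)$ entries of $B_{k+1}(w)$ read left to right are exactly the non-$(k+1)$ entries of $w$ read left to right; equivalently there is an increasing bijection $\tau$ from the non-$(k+1)$ positions of $w$ to those of $B_{k+1}(w)$ with $(B_{k+1}(w))_{\tau(i)}=w_i$. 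Second, the facts above comparing $\prec_k$ with $\prec_{k+1}$.

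Then I would partition the pairs $(i,j)$ with $i<j$ into three classes according to how many of the two relevant entries equal $k+1$, and compare the $\prec_k$-inversions of $w$ with the $\prec_{k+1}$-inversions of $B_{k+1}(w)$ class by class. Pairs with both entries equal to $k+1$ contribute nothing on either side. For pairs with neither entry equal to $k+1$, the bijection $\tau$ is increasing and value-preserving, and since $\prec_k$ and $\prec_{k+1}$ restrict to the same order on $\ZZ_+\setminus\{k+1\}$, it carries the $\prec_k$-inversions of $w$ of this type bijectively onto the $\prec_{k+1}$-inversions of $B_{k+1}(w)$ of this type. For pairs with exactly one entry equal to $k+1$: since $k+1$ is the $\prec_k$-minimum, such a pair $(i,j)$ is a $\prec_k$-inversion of $w$ iff $w_i\neq k+1$ and $w_j=k+1$; since $k+1$ is the $\prec_{k+1}$-maximum, such a pair is a $\prec_{k+1}$-inversion of $B_{k+1}(w)$ iff $(B_{k+1}(w))_i=k+1$ and $(B_{k+1}(w))_j\neq k+1$. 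The map $(i,j)\mapsto(n+1-j,\,n+1-i)$ is an involution on pairs, and by the first structural fact it sends the pairs of the first kind for $w$ exactly onto the pairs of the second kind for $B_{k+1}(w)$, so these counts agree as well. Summing the three classes yields $\inv^k(w)=\inv^{k+1}(B_{k+1}(w))$.

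The genuinely substantive point, and the step I would be most careful about, is the "exactly one $k+1$" class, where there is a real asymmetry: on the $w$-side the inversions of this type are the configurations "a non-$(k+1)$ letter to the left of a $k+1$", while on the $B_{k+1}(w)$-side they are "a $k+1$ to the left of a non-$(k+1)$ letter", and it is precisely the positional reversal built into $B_{k+1}$ that interchanges these. Everything else is routine bookkeeping; in particular one should double-check that $\prec_k$ and $\prec_{k+1}$ really do induce the same order on every letter other than $k+1$, including letters larger than $n$, so that the class of pairs with neither entry equal to $k+1$ behaves identically for the two statistics.
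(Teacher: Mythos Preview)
Your proof is correct and follows essentially the same approach as the paper's: both arguments split the pairs according to which entries equal $k+1$, use that $\prec_k$ and $\prec_{k+1}$ coincide on letters $\neq k+1$ (so $B_{k+1}$, which preserves the relative order of such letters, gives a bijection on those inversions), and then match the ``mixed'' pairs using the positional reversal of the $(k+1)$'s. Your treatment is in fact a bit more explicit than the paper's in the mixed case, since you exhibit the bijection $(i,j)\mapsto(n+1-j,\,n+1-i)$ rather than appealing to a count; the underlying idea is identical.
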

\begin{proof}
 Suppose that $(i,j)$ is a $k$-inversion of $w$ with $w_j\neq k+1$.  This means that $w_i\succ_k w_j$, but since $w_j\neq k+1,$ $w_i\succ_{k+1}w_j$ as well, and so $(i,j)$ is a $(k+1)$-inversion in $w$.  Since $B_{k+1}$ preserves the relative order of every letter with the exception of $k+1$'s, the corresponding pair in $B_{k+1}(w)$ will still be a $k+1$-inversion of $B_{k+1}(w)$.  This argument is reversible, so $(i,j)$ with $w_j\neq k+1$ is a $k$-inversion of $w$ if and only if the corresponding pair $(\hat\imath,\hat\jmath)$ is a $k+1$-inversion of $B_{k+1}(w)$.
 
Now suppose $w_j=k+1$.  The number of $k$-inversions $(i,j)$ in $w$ is equal to the number of entries $w_i$ to the left of $w_j$ for which $w_i\neq k+1$.  In addition, for every $i>j$, $(j,i)$ will not be a $k$-inversion.

In $B_{k+1}(w)$, the number of $(k+1)$-inversions $(\hat\imath,\hat\jmath)$ with $(B_{k+1}(w))_{\hat\imath}=k+1$ is equal to the number of entries to the right of $(B_{k+1}(w))_{\hat\imath}$ not equal to $k+1$.  In addition, for every $\hat\jmath<\hat\imath$, $(\hat\jmath,\hat\imath)$ will not be a $k+1$-inversion.  Since the action of $B_{k+1}$ is to reverse the positions of all of the $k+1$'s, it follows that we have $\inv^k(w)=\inv^{k+1}(B_{k+1}(w))$. \end{proof}

We also require certain compositions of basement lift maps.

\begin{definition}
  For $j>i$, we write $B_{i\to j}=B_{j}\circ B_{j-1}\circ \cdots \circ B_{i+2}\circ B_{i+1}$.  If $j>i$ we also write $B_{j\to i}=B_{i\to j}^{-1}=B_{i+1}^{-1}\circ B_{i+2}^{-1}\cdots \circ B_{j}^{-1}$.
\end{definition}

It is interesting to note that for $w\in W_n$ with maximal entry $m$,
\[ \inv(w)=\inv^0(w)=\inv^m(w)=\inv^k(w) \]
for any $k>m$.  So if $W_n^m$ is the set of words of length $n$ with maximal entry $m$, we have a nontrivial map $B_{0\to m}:W_n^m\to W_n^m$ that preserves $\inv$, i.e.\begin{equation}\label{eqn:basementattic} \inv(w) = \inv(B_{0\to m}(w)). \end{equation}  Furthermore, this map preserves the content of the word in question.

\subsection{Restriction to permutations}

The basement lifting maps are particularly interesting on permutations.  Throughout this section we think of the symmetric group $\Sn$ as the subset of $W_n$ consisting of words with distinct entries $1,\ldots,n$.  For a permutation $w\in \Sn$, the effect of $B_i$ is simply to reflect the position of the $i$ in w and keep the ordering of the remaining elements the same.  It is not hard to see that by Proposition \ref{invprop} and Equation \ref{eqn:basementattic}:

\begin{corollary}
 The basement lift maps $B_i$ are bijections $B_i:\Sn\to \Sn$. If $w\in \Sn$, we have $$\inv^k(w) = \inv^{k+1}(B_{k+1}(w))\;,
  \qquad  
  \inv(w) = \inv(B_{0\to n}(w))\;.$$
\end{corollary}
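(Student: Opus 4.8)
The plan is to reduce the corollary to the results already established for general words. First I would argue that each basement lift map $B_i$ restricts to a map $\Sn \to \Sn$: since a permutation $w \in \Sn$ has exactly one letter equal to $i$, the definition of $B_i$ says that $B_i(w)$ has its unique $i$ in the position $n+1-j$ where $j$ is the position of $i$ in $w$, and all other letters of $B_i(w)$ agree in relative order with those of $w$. Since $w$ already has distinct entries $1,\ldots,n$, so does $B_i(w)$, hence $B_i(w) \in \Sn$. In fact $B_i$ on a permutation is the involution that replaces the position $p$ of the letter $i$ by $n+1-p$ and leaves the relative order of the other letters fixed; being an involution on a finite set, it is a bijection $\Sn \to \Sn$.

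Next I would obtain the first identity directly from Proposition \ref{invprop}. That proposition is stated for all $w \in W_n$, and $\Sn \subseteq W_n$, so for $w \in \Sn$ we immediately have $\inv^k(w) = \inv^{k+1}(B_{k+1}(w))$; there is nothing further to check since the right-hand side is evaluated at $B_{k+1}(w) \in \Sn$, which we have just verified lies in $W_n$.

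For the second identity I would chain the first one. By definition $B_{0\to n} = B_n \circ B_{n-1} \circ \cdots \circ B_1$, and applying the first identity successively gives
\[ \inv^0(w) = \inv^1(B_1(w)) = \inv^2(B_2 B_1(w)) = \cdots = \inv^n(B_{0\to n}(w)). \]
Now I invoke the remark in the excerpt that for any word $u \in W_n$ with maximal entry $m$ one has $\inv(u) = \inv^0(u) = \inv^m(u) = \inv^k(u)$ for all $k \ge m$; since a permutation in $\Sn$ has maximal entry exactly $n$, both $\inv^0(w) = \inv(w)$ and $\inv^n(B_{0\to n}(w)) = \inv(B_{0\to n}(w))$ (because $B_{0\to n}(w) \in \Sn$ also has maximal entry $n$). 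Combining gives $\inv(w) = \inv(B_{0\to n}(w))$, as claimed.

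The only real point requiring care — and the step I would flag as the main obstacle — is the verification that $B_i$ genuinely preserves $\Sn$, i.e.\ that the abstract defining property of $B_i$ (prescribed positions for the $i$'s, relative order preserved on the complement) is consistent and yields a well-defined permutation when the input is a permutation. This is where one must observe that with a single copy of $i$ the "reverse the positions of the $i$'s" condition degenerates to a single transposition of position indices, so no ambiguity arises. Once that is in hand, the rest is a formal consequence of Proposition \ref{invprop} and the displayed identity \eqref{eqn:basementattic} together with the maximal-entry observation, so I would keep those parts brief.
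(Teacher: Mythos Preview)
Your proposal is correct and follows essentially the same approach as the paper, which simply asserts the corollary as an immediate consequence of Proposition~\ref{invprop} and Equation~\eqref{eqn:basementattic}; you have filled in exactly those details, together with the easy verification that $B_i$ preserves $\Sn$ and is an involution there.
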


\begin{example}
Let $\sigma\in\mathfrak S_5$ be the permutation $5,1,3,2,4$.  The image $B_{0\to n}(\sigma)$ would be computed

\[ \begin{tikzpicture}
    \node (A) at (0,0) {51324};
    \node (B) at (2,0) {53214};
    \node (C) at (4,0) {53214};
    \node (D) at (6,0) {52134};
    \node (E) at (8,0) {45213};
    \node (F) at (10,0) {42153.};
    \draw[->] (A) -- (B) node[pos=.5,above] {$B_1$};
    \draw[->] (B) -- (C) node[pos=.5,above] {$B_2$};
    \draw[->] (C) -- (D) node[pos=.5,above] {$B_3$};
    \draw[->] (D) -- (E) node[pos=.5,above] {$B_4$};
    \draw[->] (E) -- (F) node[pos=.5,above] {$B_5$};
    
\end{tikzpicture} \]
Note that $\inv(51324)=\inv(42153)=5$.
\end{example}

There are many natural questions that arise from this new $\inv$-preserving map on permutations, in particular:

\begin{question}\label{question:orbits} 
 The map $B_{0\to n}$ is an action on $\Sn$.  What can be said about the orbits?
\end{question}

\begin{question}\label{question:foata}
 Can we conjugate by the Foata bijection in order to obtain a simple $\maj$-preserving bijection on permutations?  Similarly, is there a simple description of the map induced by $B_{0\to n}$ on Carlitz codes (see \cite{carlitz75})? 
\end{question}

\section{The elevator map} \label{section:elevator}

In this section we  generalize the basement lifting map to ``arbitrary basements'' of two-row Macdonald fillings.

\begin{definition}
  Let $m\geq n$ and $a\in W_m$.  The \textit{elevator map} $\elevator_{a}:W_{n}\to W_{n}$ is defined by the following process.  Starting with a word $w=w^{(0)}\in W_n$, let $w^{(1)}=B_{0\to a_1}(w)$ and define the words $w^{(2)},\ldots, w^{(n)}$ by setting 
\[ w^{(i)} = w^{(i-1)}_{[1,i-1]}\cdot B_{a_{i-1}\to a_i}\left(w^{(i-1)}_{[i,n]}\right). \]
That is, $w^{(i)}$ is the result of applying $B_{a_{i-1}\to a_i}$ to the last $n-i+1$ letters of $w^{(i-1)}$ (and leaving the first $i-1$ letters the same).  For all $i>n$, define $w^{(i)}=w^{(n)}$.
Let $\elevator_a(w)=w^{(n)}$.
\end{definition}

\begin{example}
Let $a=2122321$ and $w=321332$.  First we set $w^{(0)}=w$ and compute 
\[ w^{(1)}=B_{0\to 2}(w^{(0)})=B_2\circ B_1(w^{(0)})=B_2(323132)=233123. \]
To construct $w^{(2)}$ by applying $B_{2\to 1}$ on the last 5 letters of $w^{(1)}$,
\[ w^{(2)}=2\cdot B_{2\to 1}(33123)=2\cdot B_2(33123)=232313. \]
Repeating this process,
\[ w^{(3)}=23\cdot B_{1\to 2}(2313)=23\cdot B_2(2313)=233132,
\]
\[ w^{(4)}=233\cdot B_{2\to 2}(132)=233132,
\]
\[ w^{(5)}=2331\cdot B_{2\to 3}(32)=2331\cdot B_3(32)=233123,
\]
\[ w^{(6)}=23312\cdot B_{3\to 2}(3)=23312\cdot B_3(3)=233123,
\]
\[ w^{(7)}=233132\cdot B_{2\to 1}(\emptyset)=233132\cdot B_2(\emptyset)=233123.
\]
Thus $\elevator_{2122321}(321332)=233123$.
\end{example}

\subsection{Macdonald Fillings}

A \textit{filling} is a map $F$ from the cells of a Young diagram of a partition $\mu$ to the positive integers.  We express a filling as a diagram in which each cell $c$ contains its image, $F(c)$.  The shape of a filling is the shape of the underlying Young diagram.

\begin{example}
A filling of shape $(3,3,1)$ is shown below.
\[ \tableau[scY]{ 4|1,2,2|3,1,3}\;. \]
\end{example}

In \cite{Haglund04}, Haglund gave a description of the Macdonald polynomials as generating functions for fillings;
\begin{equation} \label{haglundformula} \tH_{\mu}(X;q,t)=\sum_{F \text{ filling of shape $\mu$}} q^{\inv(F)}t^{\maj(F)}X^F, \end{equation}
where
\[ X^F = \prod_{c \text{ cell in $F$}} x_{F(c)}. \]
This revolutionary formula gave researchers a computationally tractable formula for $\tH_\mu$.  The generalized statistics $\inv$ and $\maj$ on fillings are defined as follows.  Recall that 
on words, $\maj(w) = \sum_{w_d>w_{d+1}} d$.  For a column $C$ of a filling $F$, define the \textit{column word} $r(C)$ by reading the entries of $C$ from top to bottom.  Then the {\it major index} of $F$ is
\[ \maj(F) = \sum_{C \text{ column of $F$}} \maj(r(C)). \]

\begin{example}
If
\[ F = \tableau[scY]{ 3 | {\color{red} 4},1,{\color{red} 3} | 1,{\color{red} 2},2 | 3,1,3 }\;, \]
then $\maj(F)=5$.  The descents in each column are highlighted in red; the first occurring in position $2$ from the top, the second in position $2$, and the third in position $1$.
\end{example}

For a filling $F$, an {\it inversion triple} is a triple of cells
\[ \tableau[scY]{x, \cdots, y | z} \]
with $x=z\neq y$, $x>y>z$, $y>z>x$, or $z>x>y$.  In other words, $x\neq y, y\neq z$ and there is a cyclic shift of the sequence $(x,y,z)$ in which the entries are nonincreasing.  We say that such an inversion triple occurs in row $i$ where $x$ and $y$ are in the $i$th row from the bottom.

An {\it inversion pair} is a pair of cells
\[ \tableau[scY]{x,\cdots,y} \]
appearing in the first row with $x>y$.
The {\it inversions} of $F$ are the inversion triples and inversion pairs, i.e.
\[ \inv(F) = \#\text{inversion triples} + \#\text{inversion pairs}. \]

\begin{remark} If we imagine that every cell lying below the filling contains the entry 0, then an inversion pair is actually an inversion triple with $z=0$.
\end{remark}

\begin{remark} To use the language of section \ref{section:liftmaps}, a triple of cells is an inversion triple if and only if the $x$ and $y$ form a $z$-inversion.  That is, if we impose the order that
\[ z+1\prec_z z+2\prec_z \ldots \prec_z k \prec_z 1 \prec_z \ldots \prec_z z \]
for $k$ the maximal entry in $F$,
\[ \tableau[scY]{x, \cdots, y | z} \]
is an inversion triple if and only if $y\prec_z x$.
\end{remark}

\begin{example}
The previous example has one inversion pair and two inversion triples (in red):
\[ \tableau[scY]{ 3 | 4,1,3 | 1,2,2 | {\color{red} 3}, {\color{red} 1},3 }\;, \qquad 
\tableau[scY]{ 3 | {\color{red} 4},1,{\color{red} 3} | {\color{red} 1},2,2 | 3,1,3 }\;,
    \qquad
\tableau[scY]{ 3 | 4,{\color{red} 1},{\color{red} 3} | 1,{\color{red} 2},2 | 3,1,3 }\;.\]
Therefore, $\inv(F)=3$.
\end{example}

\subsection{The elevator map and fillings}

We now extend the elevator map to Macdonald fillings.

\begin{definition}\label{def:fillingmap}
Let $F$ be a filling and $R_1,R_2,\ldots,R_k$ be the rows of $F$ from bottom to top.  To extend the elevator map to fillings, define $\elevator(F)$ to be the filling with rows $R_1',\ldots,R_k'$ defined recursively by
\[ R'_i = \left\{ 
  \begin{array}{cl}
    R_1 & \text{if i=1} \\     
    \elevator_{R'_{i-1}}(R_i) & \text{otherwise} \\
  \end{array}
  \right. \;.
\]
\end{definition}

\begin{example}
If
\[ F = \tableau[scY]{ 3,3,1 | 2,2,3 | 1,2,1 } \]
to compute $\elevator(F)$ we compute the rows one at a time.  The first row is untouched,
\[ \tableau[scY]{ ,, | ,, | 1,2,1 }\;. \]
We then fill the second row with $\elevator_{R_1'}(R_2)=\elevator_{121}(223)=322$,
\[ \tableau[scY]{ ,, | 2,3,2 | 1,2,1 }\;. \]
Finally, fill the third row with $\elevator_{R_2'}(R_3)=\elevator_{232}(331)=133$,
\[ \tableau[scY]{ 1,3,3 | 2,3,2 | 1,2,1 }\;. \]
\end{example}

\begin{theorem}
If $F$ is a filling with rows $R_1,R_2,\ldots,R_k$ then
\[ \sum_{i=1}^k \inv(R_i) = \inv(\elevator(F)). \]
\end{theorem}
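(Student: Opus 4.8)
The plan is to reduce to a single two-row statement and then telescope over the rows of $F$. Writing $R_1',\dots,R_k'$ for the rows of $\elevator(F)$, we have $R_1'=R_1$ and $R_i'=\elevator_{R_{i-1}'}(R_i)$ for $i\ge 2$, and $\inv(\elevator(F))$ is the number of inversion pairs of $R_1'$ plus, for each $i\ge 2$, the number of inversion triples occurring in row $i$ of $\elevator(F)$, which involves only the two rows $R_{i-1}'$ and $R_i'$. By the Remark identifying inversion triples with relative inversions, the number of inversion triples of a top row $u$ sitting over a bottom row $a$ equals $T_a(u):=\#\{\,j<j' : u_{j'}\prec_{a_j}u_j\,\}$, the indices running over the columns common to both rows. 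Thus the theorem follows from the \emph{Lemma}: for $a\in W_m$, $w\in W_n$ with $m\ge n$, one has $T_a(\elevator_a(w))=\inv(w)$. Indeed, row $1$ of $\elevator(F)$ contributes $\inv(R_1)$, and row $i$ contributes $T_{R_{i-1}'}(\elevator_{R_{i-1}'}(R_i))=\inv(R_i)$ by the Lemma, so the total is $\sum_i\inv(R_i)$. (One may even absorb $i=1$ into the Lemma by imagining a row of $0$'s below $F$, since $\elevator_{0\cdots0}=\mathrm{id}$ and $T_{0\cdots0}=\inv$.)

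To prove the Lemma, unwind the definition of $\elevator_a$: set $a_0:=0$ and $w^{(0)}=w$, $w^{(i)}=w^{(i-1)}_{[1,i-1]}\cdot B_{a_{i-1}\to a_i}\!\big(w^{(i-1)}_{[i,n]}\big)$, so that $\elevator_a(w)=w^{(n)}$ and the first $i$ letters of $w^{(i)}$ have reached their final values. The key device is the hybrid quantity
\[ S_i \;=\; \#\big\{\, j<j'\le n \;:\; j\le i,\ w^{(i)}_{j'}\prec_{a_j}w^{(i)}_j \,\big\} \;+\; \inv^{a_i}\!\big(w^{(i)}_{[i+1,n]}\big), \]
which bundles the inversion triples whose left cell lies among the first $i$ columns together with a relative‑inversion count on the still‑moving suffix. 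One checks immediately that $S_0=\inv(w)$ (the first set is empty and $a_0=0$) and $S_n=T_a(\elevator_a(w))$ (the suffix is empty), so it suffices to show $S_{i-1}=S_i$ for every $i$.

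For the step from $w^{(i-1)}$ to $w^{(i)}$ — which fixes columns $1,\dots,i-1$ and applies $B_{a_{i-1}\to a_i}$ to columns $i,\dots,n$ — I would use two facts: each $B_j$ preserves the content of a word, so $w^{(i)}_{[i,n]}$ and $w^{(i-1)}_{[i,n]}$ have the same multiset of letters; and iterating Proposition~\ref{invprop} (and its inverse) gives $\inv^{a_{i-1}}(u)=\inv^{a_i}\!\big(B_{a_{i-1}\to a_i}(u)\big)$ for every word $u$. In the locked‑in part of $S_{i-1}$, a pair with both columns $\le i-1$ is untouched, while a pair with left column $j\le i-1$ and right column in $[i,n]$ compares the already‑final letter $w^{(i)}_j$ against the suffix, and the count of such pairs depends only on the \emph{multiset} of suffix letters, hence is unchanged by the rearrangement; so the locked‑in part of $S_{i-1}$ equals $\#\{\,j<j'\le n : j\le i-1,\ w^{(i)}_{j'}\prec_{a_j}w^{(i)}_j\,\}$, which is the locked‑in part of $S_i$ minus $\delta:=\#\{\,j'\in(i,n] : w^{(i)}_{j'}\prec_{a_i}w^{(i)}_i\,\}$. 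Meanwhile the suffix part of $S_{i-1}$ is $\inv^{a_{i-1}}(w^{(i-1)}_{[i,n]})=\inv^{a_i}(w^{(i)}_{[i,n]})$, and peeling off column $i$ from this relative‑inversion count splits it as $\delta+\inv^{a_i}(w^{(i)}_{[i+1,n]})$, i.e.\ $\delta$ plus the suffix part of $S_i$. The two copies of $\delta$ cancel, giving $S_{i-1}=S_i$.

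The only place requiring care is the bookkeeping in this last step: a locked‑in pair $(j,j')$ with $j\le i-1<j'$ has its left letter fixed but its right letter still in flux, and one must observe that the number of such pairs is nonetheless invariant under rearranging the suffix precisely because $\prec_{a_j}$ sees only the fixed value $w^{(i)}_j$ and the multiset of suffix entries. The reductions to the Lemma, the base cases $S_0$ and $S_n$, and the telescoping of the suffix term are all formal; I expect the cleanest writeup is to set $S_{i-1}=A+B$, $S_i=A'+B'$ and verify $A'-A=\delta=B-B'$.
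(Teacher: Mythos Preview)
Your proof is correct and takes essentially the same approach as the paper: your hybrid quantity $S_i$ is precisely the number of inversion triples in the paper's auxiliary two-row filling $F^{(i)}$ (bottom row $a_1\cdots a_i\,a_i\cdots a_i$, top row $w^{(i)}$), and both arguments telescope $S_{i-1}=S_i$ via Proposition~\ref{invprop}. Your handling of the ``cross'' pairs (left column $j\le i-1$, right column $\ge i$) via content preservation is, if anything, more explicit than the paper's.
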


\begin{proof}
It is sufficient to prove that, in the notation of Definition \ref{def:fillingmap}, the number of inversion triples (or pairs) in row $R_i'$ of $\elevator(F)$ is equal to $\inv(R_i)$ for all $i$.  We proceed by induction.  For $R_1$, this is clearly true since $R_1'=R_1$ and the inversion pairs remain the same.  

Now, let $m>1$ and assume the claim is true for all $i\le m-1$.  We wish to show that the number of inversion triples of in $R_m'$ is equal to $\inv(R_m)$.

Recall that to compute $\elevator_{R_{m-1}'}(R_m)$, we construct a sequence of words $$R_m=w^{(0)}, w^{(1)},w^{(2)},\ldots,w^{(k)}=R_m'.$$  Construct a corresponding sequence $F^{(0)},\ldots,F^{(k)}$ of fillings of a two-row diagram of shape $(\mu_{m-1},\mu_m)$ where for $i\ge 1$ the first row of $F^{(i)}$ is the first $i$ letters of $R_{m-1}'$ followed by repeated copies of $R_{m-1}'$, and the second row is $w^{(i)}$, and if $i=0$, in $F^{(0)}$, the first row contains all zeroes.  (See Example \ref{example:tworow} below.) The second rows of $F^{(0)},F^{(1)},\ldots,F^{(k)}$ are $w^{(0)}, w^{(1)},w^{(2)},\ldots,w^{(k)}$ respectively.  We claim that each $F^{(i)}$ has the same number of inversion triples in the second row.

It is clear that $\inv(R_m)=\inv(F^{(0)})$.  If row $R_{m-1}'$ has entries $a_1,\dots,a_{\mu_{m-1}}$, then
\begin{align*}
\inv(F^{(1)}) 
 & = \inv^{a_1}(B_{0\to a_1}(R_{m})) \\
 & = \inv^{a_1}(B_{a_1}\circ B_{a_1-1}\circ\cdots\circ B_1(R_m)) \\
 & = \inv^{a_1-1}(B_{a_1-1}\circ\cdots\circ B_1(R_m))\\
 & = \inv^0(R_{m})=\inv(R_m) \\
 & = \inv(F^{(0)}),
\end{align*}
by repeated application of Proposition \ref{invprop}.

By induction, assume that $F^{(i)}$ has the same number of inversion triples as $F^{(j)}$ for all $j<i$.
Constructing $w^{(i+1)}$ from $w^{(i)}$ only permutes the last $n+1-i$ letters, so any letter that was part of an inversion triple with cells in position $i$ or less will still be an inversion triple.  

Since the first row of $F^{(i)}$ is completely filled with $a_i$'s after position $i$, any inversion triple in cells completely after position $i$ is equivalent to an $a_i$-inversion in the second row.  We know the number of $a_i$-inversions is equal to the number of $a_{i+1}$-inversions after applying $B_{a_i\to a_{i+1}}$ by Proposition \ref{invprop}.  Changing all of the entries in the first row after position $i$ to $a_{i+1}$ allows us to describe the inversion triples as $a_{i+1}$-inversions.  Thus $\inv(F^{(i)})=\inv(F^{(i+1)})$, completing the proof.
\end{proof}

\begin{example}\label{example:tworow}
Consider the filling
\[ F=\tableau[scY]{4,3,1,4 | 1,4,3,2}\;. \]
Each of the following fillings has three inversion triples:
\[ F^{(0)}=\tableau[scY]{4,3,1,4 | 0,0,0,0}\;, \]
\[ F^{(1)}=\tableau[scY]{4,1,3,4 | 1,1,1,1}\;, \]
\[ F^{(2)}=\tableau[scY]{4,4,1,3 | 1,4,4,4}\;, \]
\[ F^{(3)}=\tableau[scY]{4,4,1,3 | 1,4,3,3}\;, \]
\[ F^{(4)}=\tableau[scY]{4,4,1,3 | 1,4,3,2}\;, \]
\end{example}

\section{The symmetry $H_\mu(X;1,t)=H_{\mu'}(X;t,1)$}\label{section:Macdonald}

Before we can prove Equation \eqref{Macdonaldq=1} bijectively, we require one more ingredient: a bijection $f:W_n\to W_n$ for which $\inv(f(w))=\maj(w)$ for all $w\in W_n$.  The {\it Foata bijection}   $\foata:W_n\rightarrow W_n$ \cite{foata68} and the generalized {\it Carlitz bijection} on words $\theta:W_n\rightarrow W_n$ \cite{carlitz75,Gillespie16} are two of the most well-known such maps.

We will now construct a bijection from fillings of $\mu$ to fillings of $\mu'$ that sends $\maj$ to $\inv$, giving a direct bijective proof of Equation \eqref{Macdonaldq=1}.  For a filling $F$ of shape $\mu$, let $C_1,C_2,\ldots,C_k$ be the columns of $F$.  From the definition we have that
\[ \maj(F) = \sum_i \maj(C_i). \]

If we apply the Foata map $\foata$ (or the Carlitz map $\theta$, or any other such map) to each column, we obtain a collection of words $R_1,\ldots,R_k$ with $\inv(R_i) = \maj(C_i)$, so
\[ \maj(F) = \sum_i \inv(R_i).\] 
Now, construct the filling $F'$ of shape $\mu'$ whose rows are $R_1,\ldots,R_k$.  Then apply the elevator map $\elevator(F')$ to obtain a filling such that 
\[ \inv(\elevator(F')) = \sum_i \inv(R_i) = \sum_i \maj(C_i)=\maj(F). \]

Since every stage of the construction $F\mapsto \elevator(F')$ is a bijection, this gives a bijection between fillings of shape $\mu$ and fillings of shape $\mu'$ such that
\[ \inv(\elevator(F')) = \maj(F). \]

\begin{theorem}\label{thm:symmetry1} The bijection above gives a direct combinatorial proof that, for any partition $\mu$,
  \[ \tH_\mu(X;1,t) = \tH_{\mu'}(X;t,1). \]
\end{theorem}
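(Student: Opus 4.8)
The plan is to assemble the pieces already constructed in this section into a single content-preserving, statistic-transforming bijection, and then invoke Haglund's formula \eqref{haglundformula} to read off the identity of symmetric functions. First I would fix a partition $\mu$ and observe that specializing \eqref{haglundformula} at $q=1$ gives
\[ \tH_\mu(X;1,t) = \sum_{F \text{ of shape } \mu} t^{\maj(F)} X^F, \]
where the $\inv$ statistic has collapsed because $1^{\inv(F)}=1$. Similarly, $\tH_{\mu'}(X;t,1) = \sum_{G \text{ of shape } \mu'} t^{\inv(G)} X^G$. Thus it suffices to produce a bijection $\Phi$ from fillings of shape $\mu$ to fillings of shape $\mu'$ such that $\maj(F) = \inv(\Phi(F))$ and $X^F = X^{\Phi(F)}$, i.e. $\Phi$ preserves content.

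Next I would define $\Phi$ exactly as described in the paragraphs preceding the theorem: given $F$ with columns $C_1,\dots,C_k$, apply a maj-to-inv bijection $f$ (Foata's $\foata$ or Carlitz's $\theta$) to each column word to get words $R_1,\dots,R_k$ with $\inv(R_i)=\maj(C_i)$; form the filling $F'$ of shape $\mu'$ whose $i$th row is $R_i$ (the transposed shape makes this legal, since column $i$ of $F$ has length equal to row $i$ of $\mu'$); and set $\Phi(F) = \elevator(F')$. I would then verify the two required properties in turn. For the statistic: by the defining property of $f$ we have $\maj(F)=\sum_i \maj(C_i)=\sum_i \inv(R_i)$, and by the Theorem on the elevator map applied to $F'$ (whose rows are the $R_i$), $\inv(\elevator(F'))=\sum_i \inv(R_i)$; chaining these gives $\inv(\Phi(F))=\maj(F)$. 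For content-preservation: $f$ rearranges each column word, so $X^{C_i}$ is unchanged; and each basement lift map $B_i$, hence each composition $B_{i\to j}$, hence the elevator map $\elevator_a$ and its extension $\elevator(\cdot)$ to fillings, only permutes letters within each row, so $X^{F'}=X^F$ and $X^{\elevator(F')}=X^{F'}$. Finally, bijectivity of $\Phi$ follows because it is a composition of bijections: $f$ is a bijection on words of each fixed length, the passage $F\leftrightarrow(C_1,\dots,C_k)\leftrightarrow F'$ is a bijection between fillings of shape $\mu$ and fillings of shape $\mu'$, and $\elevator(\cdot)$ is invertible since each $B_i$ is (it is an involution-like reversal of the positions of a single letter, explicitly invertible, and the elevator map is built from the $B_{i\to j}$ in a triangular, stepwise fashion that can be run backwards row by row).

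The main obstacle, and the only step requiring genuine care rather than bookkeeping, is making sure the elevator map $\elevator(\cdot)$ on fillings is genuinely well-defined and bijective in the generality needed here — in particular that $\elevator_{R'_{i-1}}(R_i)$ makes sense when the basement word $R'_{i-1}$ may be shorter than, equal to, or (after the padding described in the proof of the elevator theorem) effectively longer than $R_i$, and that the stepwise construction $w^{(0)}\to w^{(1)}\to\cdots\to w^{(n)}$ can be inverted one arrow at a time. This is already handled by the $m\ge n$ hypothesis in the definition of $\elevator_a$ together with the convention $w^{(i)}=w^{(n)}$ for $i>n$, and by the fact that each $B_{a_{i-1}\to a_i}$ is an explicit bijection on $W_{n-i+1}$; so the argument reduces to checking these conventions are compatible with the transposed-shape setup, which is routine. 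Everything else is a direct citation of the Theorem on the elevator map and of the defining property of the Foata/Carlitz bijection, followed by the one-line specialization of Haglund's formula at $q=1$.
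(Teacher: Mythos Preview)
Your proposal is correct and follows essentially the same route as the paper: the paper's ``proof'' is precisely the construction in the paragraphs immediately preceding the theorem, combined with Haglund's formula, and you have reproduced that construction while filling in the details (the explicit specialization of \eqref{haglundformula} at $q=1$, the verification that each step preserves content, and the bijectivity check) that the paper leaves implicit. One minor remark: in your discussion of the ``main obstacle,'' the basement word $R'_{i-1}$ can never be strictly shorter than $R_i$, since both are rows of a partition-shaped filling and $\mu'_{i-1}\ge\mu'_i$; so the $m\ge n$ hypothesis in the definition of $\elevator_a$ is automatically satisfied and no padding is needed at that stage.
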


\subsection{Future work}

The surprising results of Sections \ref{section:liftmaps} and \ref{section:elevator} motivate two important new directions of further study.  

The first is to better understand the implications of the elevator map, combined with the Carlitz or Foata bijections, on our combinatorial understanding of Macdonald polynomials.  In particular, the monomials appearing in $\tH_\mu(X;1,t)$ are in one to one correspondence with those appearing in $\tH_{\mu}(X;q,t)$, and so Theorem \ref{thm:symmetry1} may be extendable to the general $q,t$-symmetry problem.  

Additionally, the $q=0$ specialization, $\tH_{\mu}(X;0,t)$, has a well-understood Schur function expansion in terms of the \textit{cocharge} statistic of Lascoux and Sch\"utzenberger \cite{LS}, which can be proven to be essentially equivalent to the Macdonald $\maj$ statistic.  However, its symmetric counterpart, $\tH_{\mu'}(X;t,0)$, is not yet understood in terms of an analogous statistic arising from $\inv$.  We hope that our new proof of symmetry in the $q=1$ case can be specialized to the $q=0$ case - which, combinatorially, involves simply restricting to certain sets of filligns - in order to obtain a new Schur expansion for the Hall-Littlewood polynomials.

The second natural direction of future research is to better understand the composition $B_{0\to n}$ of basement lifting maps on permutations and words that results in an $\inv$-preserving bijection, as in Questions \ref{question:orbits} and \ref{question:foata}.  This nontrivial bijection is worthy of study in its own right, and yet we can study its variants in the context of Macdonald polynomials as well.  For instance, the nontrivial powers of $B_{0\to n}$ can be composed with the elevator map to provide alternative bijections that similarly prove Theorem \ref{thm:symmetry1}. 

\section*{Acknowledgements}
Thanks to Jake Levinson for his help translating the abstract, and to Anne Schilling, Jim Haglund, and John Berman for helpful conversations over the course of this work.

\bibliographystyle{alpha}
\bibliography{elevatorbib}

\end{document}